\newtheorem{theorem}{Theorem}
\newtheorem{lemma}{Lemma}
\title{Empirical likelihood for generalized smoothly trimmed mean }
\date{}
\author[1]{Elina Kresse} 
\author[1]{Emils Silins}
\author[1]{Janis Valeinis \thanks{corresponding author: janis.valeinis@lu.lv}}
\affil[1]{Laboratory of Statistical Research and Data Analysis, Faculty of Physics, Mathematics and Optometry, University of Latvia    {\it(Jelgavas Street 3, LV-1586 Riga, Latvia)}}
\begin{document} 
\maketitle

\begin{abstract}
This paper introduces a new version of the smoothly trimmed mean with a more general version of weights, which can be used as an alternative to the classical trimmed mean. We derive its asymptotic variance and to further investigate its properties we establish the empirical likelihood for the new estimator. As expected from previous theoretical investigations we show in our simulations a clear advantage of the proposed estimator over the classical trimmed mean estimator. Moreover, the empirical likelihood method gives an additional advantage for data generated from contaminated models. For the classical trimmed mean it is generally recommended in practice to use symmetrical 10\% or 20\% trimming. However, if the trimming is done close to data gaps, it can even lead to spurious results, as known from the literature and verified by our simulations. Instead, for practical data examples, we choose the smoothing parameters by an optimality criterion that minimises the variance of the proposed estimators.

\textbf{Keywords}: L-estimators; Smoothly trimmed mean; Asymptotic variance; Empirical likelihood
\end{abstract}

\maketitle                   






\section{Introduction}

L-estimators, defined as a linear combination of order statistics, are commonly used for robust inference of central tendency. The trimmed mean is one of the most popular L-estimators, dating back to 1963 when it was introduced by John W. Tukey and Donald H. McLaughlin (1963). The idea of trimming is to remove a predetermined proportion of the extreme values and calculate the sample mean based on the remaining observations.  For statistically robust inference, especially for comparing several independent groups, we refer to the ``Guide to robust statistical methods'' by Wilcox (2023) and the classical book ``Introduction to robust estimation and hypothesis testing'', also by Wilcox (2011).

Although the trimmed mean is widely used for statistical inference, it has a significant drawback - if the trimming is done at uniquely defined percentiles, the limiting distribution of the trimmed mean may not be asymptotically normal if the data have a discrete or continuous distribution with gaps (Stigler, 1973). This can lead to invalid confidence intervals or test results. For this reason, Stigler (1973) proposed to use the smoothly trimmed mean estimator and showed its asymptotic normality for almost any distribution of interest (Stigler, 1973).
\par Although the smoothly trimmed mean estimator has been mentioned in the literature since 1967 (Crow and Siddiqui, 1967), it has been studied very little. One of the reasons for this may be the difficulty in finding the asymptotic variance and its estimator. In the case of the smoothly trimmed mean, the triangular and trapezoidal weight functions have been considered (Stigler, 1973).
\par We introduce a new version of the smoothly trimmed mean with a more general version of the weight function. The jackknife method has been widely used to estimate the variance of the smoothly trimmed mean (Parr \textit{et.al.}, 1982). Although the technique is effective and straightforward, it is computationally expensive. Using the common technique of influence function calculation, we obtained the asymptotic variance of our estimator and proposed its estimator.
\par{ Confidence intervals based on the t-test and normal approximation for both trimmed and smoothly trimmed means can be imprecise when the data distribution is skewed and the sample size is small or medium. An alternative method for constructing confidence intervals is the empirical likelihood method. In 2002, Qin and Tsao introduced the empirical likelihood method for the trimmed mean (Qin and Tsao, 2002). In 2007, Glenn and Zhao (2007) introduced the weighted empirical likelihood method, which is applied to data that are independent but may not be identically distributed. Combining these two ideas, we establish the empirical likelihood method for the smoothly trimmed mean estimator}.
\par We provide simulations for several contaminated distributions. As expected from Stigler's publication (1973), we see that when there is a gap in the distribution caused by some contaminated models, the asymptotic 95\% coverage is violated for the classical trimmed mean estimator. The behaviour of a new estimator is much more consistent - the coverage accuracy is much closer to 0.95. In addition, the empirical likelihood shows better coverage accuracy, especially when using normally distributed data contaminated with outliers generated by the uniform distribution. 
\par A fixed amount of trimming is often recommended in the literature and used in practical applications. For example, Wilcox proposes a symmetric 20\% trimming, mentioning that it is a good choice for general use as the standard error for such an estimator is much smaller than that of an estimator associated with $\alpha = 0.1$ (Wilcox, 2011). It is a common practice to use 10\% as well. However, such an approach for the trimmed mean in some cases may lead to incorrect results. In terms of coverage accuracy our simulation study confirms that for the distributions with gaps choosing the trimming proportion equal to the level of contamination may lead to wrong results. In contrast, in the case of smoothly trimmed mean it is possible to select the trimming proportion close to the contamination. Another approach is to determine optimal trimming empirically. The literature has considered multiple procedures to set the optimal trimming for the trimmed mean (Jaeckel, 1971; Sawilowsky, 2002; L{\'e}ger, 1990; Keselman, 2007). In this paper we will use Jaeckel's idea for the data examples, which determines as optimal the amount of trimming at which an estimator has the smallest estimated asymptotic variance.
\par The rest of this paper is organised as follows. In Section 2 we define the smoothly trimmed mean and introduce its asymptotic variance. In section 3 we establish empirical likelihood for the smoothly trimmed mean. In Section 4 we compare the coverage accuracy and the average length of 95\% normal approximation based and empirical likelihood confidence intervals for the trimmed and the smoothly trimmed mean and construct the confidence intervals for a real dataset. Section 5 contains some concluding remarks and Appendix is devoted for proofs.

\section{Smoothly trimmed mean and its asymptotic variance}
Let  $X_{1},...,X_{n}$ be independent identically distributed random variables with a common distribution function $F$ and let $X_{(1)} \leq X_{(2)} \leq ... \leq X_{(n)}$ denote the order statistics. The $\alpha$-trimmed mean is defined as
\begin{equation*}
 \overline{X}_{\alpha} = \frac{1}{n-2r} \sum_{i=r+1}^{n-r} X_{(i)},
\end{equation*}
where  $0 \leq \alpha < 0.5$ is trimming proportion and  $r=[n\alpha]$ (Barry \textit{et.al.}, 2008, p. 193).

Let $\mu_{\alpha}$ be the asymptotic mean of $\overline{X}_{\alpha}$. Then the asymptotic variance of the $\alpha$-trimmed mean is 
\begin{align*} 
D(\overline{X}_{\alpha}) & = \frac{1}{(1-2\alpha)^2} \Big (\alpha(F^{-1}(\alpha) - \mu_{\alpha})^2 + \int_{F^{-1}(\alpha)}^{F^{-1}(1-\alpha)} (x-\mu_{\alpha})^2dF(x) \\
 & + \alpha(F^{-1}(1-\alpha) - \mu_{\alpha})^2 \Big)
\end{align*}
and its estimator is 
\begin{align*} 
\widehat{D}(\overline{X}_{\alpha}) & = \frac{1}{n(1-2\alpha)^2} \bigg ( \sum_{i=r+1}^{n-r} (X_{(i)} -  \overline{X}_W)^2 + r(X_{(r+1)} -  \overline{X}_W)^2 \\
& + r(X_{(n-r)} -  \overline{X}_W)^2 \bigg), 
\end{align*}
where $\overline{X}_W$ is the Winsorized mean (Wilcox, 2011, p. 60).

The smoothly trimmed mean is defined as (Stigler, 1973)
\begin{equation}
\overline{X}_{ST} = \frac{1}{n}\sum_{i=1}^{n}J \Big(\frac{i}{n+1}\Big)X_{(i)},
\end{equation}
where $J(\cdot)$ is an appropriate weight function. 
The following weight functions have been considered by Stigler (1973):
\begin{equation}
\label{eq:svari1}
J(u) = \left\{ \begin{array}{lll}
\frac{u-\alpha}{0.5-\alpha}, &\mbox {if $\alpha \leq u \leq 0.5$}\\
\frac{1-u-\alpha}{0.5-\alpha}, &\mbox{if $0.5 < u \leq 1-\alpha$}\\
0, & \mbox{otherwise} \end{array} \right.
\end{equation}
and
\begin{equation}
\label{eq:svari2}
J(u) = \left\{ \begin{array}{llll}
\big(u-\frac{\alpha}{2}\big)\frac{2h}{\alpha}, &\mbox{if $\frac{\alpha}{2} \leq u \leq \alpha$}\\
h, & \mbox{if $\alpha \leq u \leq 1-\alpha$}\\
\big(1-\frac{\alpha}{2} - u\big)\frac{2h}{\alpha}, &\mbox{if $1-\alpha \leq u \leq 1- \frac{\alpha}{2}$}\\
0, & \mbox{otherwise} \end{array} \right.,
\end{equation} 
where $h = 2(2-3\alpha)^{-1}$. 

The three upper plots in Figure \ref{fig:fig1} show the weight function (\ref{eq:svari1}) and the three bottom plots deal with the weight function (\ref{eq:svari2}) with different $\alpha$ values. For the weight function (\ref{eq:svari1}) the trimming proportion equals $\alpha$ and the smoothing parameter is constant $0.5$. Whereas the weight function (\ref{eq:svari2}) contains the trimming proportion equal to $\alpha/2$ and the smoothing parameter equal to $\alpha$.

\begin{figure}[!h]
\begin{center}
\includegraphics[scale=0.6]{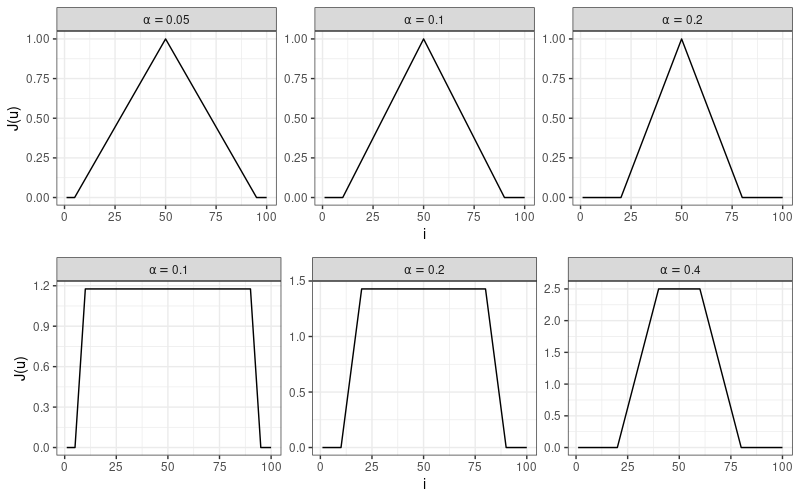}\\
\caption{Weight functions (\ref{eq:svari1}) and (\ref{eq:svari2}).}
\label{fig:fig1}
\end{center}
\end{figure}

As the smoothing parameter in weight functions (\ref{eq:svari1}) and (\ref{eq:svari2}) can not be selected arbitrarily, our idea is to introduce slightly more general estimator by the weight function

\begin{equation}
\label{eq:svari3}
J(u) = \left\{ \begin{array}{llll}
\frac{u-\alpha}{\gamma-\alpha}, &\mbox {if $\alpha \leq u <  \gamma $}\\
1, & \mbox{if $\gamma \leq u \leq 1-\gamma$}\\
\frac{1-u-\alpha}{\gamma-\alpha}, &\mbox{if $1-\gamma < u \leq 1- \alpha $}\\
0, & \mbox{otherwise} \end{array} \right..
\end{equation}
The parameter $\alpha$ defines the trimming proportion and parameter $\gamma$ - the smoothing proportion. In Figure \ref{fig:fig2} some examples of parameter values can be found.
\begin{figure}[!h]
\begin{center}
\includegraphics[scale=0.6]{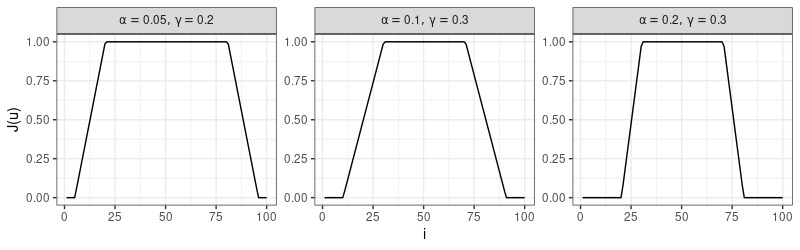}\\
\caption{Weight function (\ref{eq:svari3}).} 
\label{fig:fig2}
\end{center}
\end{figure}
The influence function is used to estimate the asymptotic variance of the smoothly trimmed mean. The relation between the influence function and the asymptotic variance of an estimate is as follows (Wilcox, 2011)
\begin{equation*}
D(T,F) = \int IF^2 (x, T, F)dF(x)
\end{equation*}
and an influence function of L-estimators can be written in the following form (Huber, 2004)
\begin{equation}
\label{eq:infl}
       IF(x, T, F) = \int_{-\infty}^x J(F(y))dy - \int_{-\infty}^{\infty} (1 - F(y)) J(F(y))dy.
\end{equation}
Define 
\begin{align*}
& E_1  = \frac{1}{\gamma - \alpha} \Big( F(x) x  -  \int_{\alpha}^{F(x)} F^{-1}(u)du - \alpha x \Big ), \\
& E_2  = \frac{1}{\gamma - \alpha} \Big( (\gamma - \alpha) F^{-1}(\gamma)- \int_{\alpha}^{\gamma} F^{-1}(u)du  \Big) + x - F^{-1}(\gamma), \\
& E_3  = \frac{1}{\gamma - \alpha}\Big( (\gamma - \alpha) F^{-1}(\gamma) - \int_{\alpha}^{\gamma} F^{-1}(u)du  \Big) + F^{-1}(1-\gamma) - F^{-1}(\gamma) \\
& \hphantom{E_1}  + \frac{1}{\gamma-\alpha}\Big ( (1 - \alpha)x + (\alpha - \gamma) F^{-1}(1-\gamma) - F(x)x +  \int_{1-\gamma}^{F(x)}F^{-1}(u)du \Big), \\
& E_4 = \frac{1}{\gamma - \alpha} \Big( (\gamma - \alpha) F^{-1}(\gamma)- \int_{\alpha}^{\gamma} F^{-1}(u)du  \Big) + F^{-1}(1-\gamma) - F^{-1}(\gamma)\\
& \hphantom{E_1} + \frac{1}{\gamma-\alpha}\Big((\alpha - \gamma) F^{-1}(1-\gamma)+ \int_{1-\gamma}^{1-\alpha}F^{-1}(u)du \Big), \end{align*}
\begin{align*}
& I  \hphantom{1} = \frac{1}{\gamma-\alpha} \Big (  (\gamma +\alpha \gamma - \gamma^2 - \alpha) F^{-1}(\gamma) - (1+\alpha)\int_{\alpha}^{\gamma} F^{-1}(u)du  \\
& \hphantom{E_1} +  2 \int_{\alpha}^{\gamma}u F^{-1}(u)du  + (2-\alpha) \int_{1-\gamma}^{1-\alpha} F^{-1}(u)du + (\alpha\gamma -\gamma^2)F^{-1}(1-\gamma) \\
& \hphantom{E_1} - 2 \int_{1-\gamma}^{1-\alpha} u F^{-1}(u)du  \Big ) +  \int_{\gamma}^{1-\gamma} F^{-1}(u)du +  F^{-1}(1-\gamma)\gamma + F^{-1}(\gamma)\gamma \\
& \hphantom{E_1} - F^{-1}(\gamma).
\end{align*}

\begin{theorem}
Let the weight function for the smoothly trimmed mean be defined as (\ref{eq:svari3}), then the influence function for the smoothly trimmed mean is
\begin{equation*}
\begin{aligned}
IF(x, T, F) = \left\{ \begin{array}{lllll}
-I, & x < F^{-1}(\alpha) \\
E_1 -I, & F^{-1}(\alpha) \leq x < F^{-1}(\gamma) \\
E_2 -I,  &  F^{-1}(\gamma) \leq x \leq F^{-1}(1-\gamma) \\
\begin{aligned}
E_3 -I \end{aligned},& F^{-1}(1-\gamma) < x \leq F^{-1}(1-\alpha)\\
\begin{aligned}
E_4 -I \end{aligned}, & x > F^{-1}(1-\alpha) \end{array} \right.
\end{aligned}
\end{equation*}
\label{theorem1}
and the asymptotic variance is
\begin{equation}
\label{stm_var}
\begin{aligned}
    D(\overline{X}_{ST}) & = \alpha I^2 + \int^{F^{-1}(\gamma)}_{F^{-1}(\alpha)} \Big( E_1 -I \Big )^2 dF(x) + \int^{F^{-1}(1-\gamma)}_{F^{-1}(\gamma)} \Big( E_2 -I \Big )^2 dF(x) \\
    & + \int^{F^{-1}(1-\alpha)}_{F^{-1}(1-\gamma)} \Big( E_3 -I \Big )^2 dF(x) +
    \alpha (E_4 - I)^2.
\end{aligned}
\end{equation}
\end{theorem}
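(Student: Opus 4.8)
The plan is to substitute the weight function (\ref{eq:svari3}) into the general L-estimator influence function (\ref{eq:infl}) and evaluate its two integrals explicitly. The decisive observation is that the second integral in (\ref{eq:infl}),
\[
\int_{-\infty}^{\infty}\big(1-F(y)\big)J(F(y))\,dy,
\]
does not depend on $x$; I would name this constant $I$, so that $IF(x,T,F)=\int_{-\infty}^{x}J(F(y))\,dy - I$. The problem then splits into two tasks: computing the cumulative integral $\int_{-\infty}^{x}J(F(y))\,dy$ according to the position of $x$ relative to the breakpoints $F^{-1}(\alpha)$, $F^{-1}(\gamma)$, $F^{-1}(1-\gamma)$, $F^{-1}(1-\alpha)$, and separately evaluating the constant $I$.

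First I would treat the cumulative integral region by region. Since $J(F(y))=0$ for $y<F^{-1}(\alpha)$, the leftmost region gives $IF=-I$. On each subsequent region I would split the integral at the breakpoints and convert integrals of $F$ into quantile-function integrals via the substitution $u=F(y)$ together with the integration-by-parts identity
\[
\int_{a}^{b}F(y)\,dy = bF(b)-aF(a)-\int_{F(a)}^{F(b)}F^{-1}(u)\,du .
\]
Applied to the first sloped piece $J(u)=(u-\alpha)/(\gamma-\alpha)$ this produces exactly $E_1$; adding the flat piece $\int_{F^{-1}(\gamma)}^{x}1\,dy=x-F^{-1}(\gamma)$ gives $E_2$; handling the descending piece in the same way yields the extra terms of $E_3$; and integrating the descending piece all the way up to $F^{-1}(1-\alpha)$ gives the constant $E_4$. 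A convenient consistency check is that the resulting influence function is continuous at all four breakpoints.

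Next I would evaluate $I$ by splitting $\int_{-\infty}^{\infty}(1-F(y))J(F(y))\,dy$ over the three pieces on which $J$ is nonzero, reducing each one with $u=F(y)$ and integration by parts. This step produces the weighted quantile integrals $\int_{\alpha}^{\gamma}uF^{-1}(u)\,du$ and $\int_{1-\gamma}^{1-\alpha}uF^{-1}(u)\,du$ alongside the unweighted integrals and the boundary terms $F^{-1}(\gamma)$ and $F^{-1}(1-\gamma)$, which collect into the stated expression for $I$. As an internal check one can instead use $\int IF\,dF=0$, i.e. the Fubini identity $\int\big(\int_{-\infty}^{x}J(F(y))\,dy\big)dF(x)=\int(1-F(y))J(F(y))\,dy$, which confirms the same value of $I$.

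Finally, the asymptotic variance follows from $D(T,F)=\int IF^{2}(x,T,F)\,dF(x)$ by integrating the squared influence function over the same five regions. On the two tails the influence function equals the constants $-I$ and $E_4-I$, and because $F$ assigns probability $\alpha$ to each tail these contribute $\alpha I^{2}$ and $\alpha(E_4-I)^{2}$; the three middle regions supply the three integral terms, reproducing (\ref{stm_var}). The main obstacle is the bookkeeping in the computation of $I$: it combines three integration-by-parts calculations whose numerous boundary and quantile-integral terms must be collected without sign or coefficient errors, and some care is needed with the substitution $u=F(y)$ at the quantile endpoints.
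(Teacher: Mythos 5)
Your proposal is correct and follows essentially the same route as the paper's proof: both compute $\int_{-\infty}^{x}J(F(y))\,dy$ region by region via the substitution $u=F(y)$ and integration by parts (yielding $E_1,\dots,E_4$), separately evaluate the $x$-independent constant $I=\int(1-F(y))J(F(y))\,dy$ over the three pieces where $J$ is nonzero, and then obtain the variance by integrating $IF^2$ over the five regions, with the tails of probability $\alpha$ giving $\alpha I^2$ and $\alpha(E_4-I)^2$. Your added consistency checks (continuity at the breakpoints and the identity $\int IF\,dF=0$) are extras the paper does not include, but they do not change the argument.
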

\begin{proof}
The proof of Theorem \ref{theorem1} is given in the Appendix. 
\end{proof}

To estimate the asymptotic variance of the smoothly trimmed mean, define
\begin{align*}
& \widehat{E_1}  = \frac{1}{m - r}  \Big ( (i - r )X_{(i)} - \sum_{k=r+1}^{i} X_{(k)}  \Big), \\
& \widehat{E_2}  = \frac{1}{m - r} \Big ( (m -r) X_{(m+1)} - \sum_{k=r+1}^{m} X_{(k)} \Big) + X_{(i)} - X_{(m+1)}, \\
& \widehat{E_3}  =  \frac{1}{m - r} \Big ( (m - r)X_{(m+1)} - \sum_{k=r+1}^{m} X_{(k)} \Big) + X_{(n-m)} - X_{(m+1)} \\
& \hphantom{\widehat{E_1}} + \frac{1}{m-r} \Big( (n - r) X_{(i)} + (r  - m)X_{(n-m)} - i X_{(i)} + \sum_{k=n-m+1}^{i}X_{(k)} \Big) , \\
& \widehat{E_4} =  \frac{1}{m - r} \Big ( (m - r)X_{(m+1)} - \sum_{k=r+1}^{m} X_{(k)} \Big) + X_{(n-m)} - X_{(m+1)} \\
 & \hphantom{\widehat{E_1}} + \frac{1}{m-r} \Big( (r  - m)X_{(n-m)} + \sum_{k=n-m+1}^{n-r}X_{(k)} \Big), 
 \end{align*} 
 \begin{align*}
& \widehat{I}  \hphantom{1} = \frac{1}{m-r} \Bigg ( \Big(m + \frac{mr}{n} - r - \frac{m^2}{n} \Big) X_{(m+1)} - \Big(1 + \frac{r}{n}\Big) \sum_{i=r+1}^{m} X_{(i)} \\
& \hphantom{\widehat{E_1}} + \frac{2}{n} \sum_{i=r+1}^{m} i X_{(i)}  + \Big(2 - \frac{r}{n}\Big) \sum_{i=n-m+1}^{n-r} X_{(i)} + \Big(\frac{rm - m^2}{n}\Big) X_{(n-m)} \\
& \hphantom{\widehat{E_1}} - \frac{2}{n}  \sum_{i=n-m+1}^{n-r} i X_{(i)} \Bigg ) + \frac{1}{n} \Bigg ( \sum_{i=m+1}^{n-m} X_{(i)} + m X_{(n-m)} + (m -n ) X_{(m+1)} \Bigg ),
\end{align*} 
where  $r = [\alpha n]$ and $m = [\gamma n]$.

The estimator of the asymptotic variance of the smoothly trimmed mean is
\begin{equation}
\begin{aligned}
\label{stm_var_estim}
  \widehat{D}(\overline{X}_{ST}) & = \frac{1}{n^2} \Bigg (K^2 \Big( r \widehat{I}^2  + \sum_{i=r+1}^{m} (\widehat{E_1} - \widehat{I})^2 + \sum_{i=m+1}^{n-m} (\widehat{E_2} - \widehat{I})^2 \\
  & + \sum_{i=n-m+1}^{n-r} (\widehat{E_3} - \widehat{I})^2 + r (\widehat{E_4} - \widehat{I})^2 \Big) \Bigg ),
\end{aligned} 
\end{equation} where $K = \frac{n}{\sum_{i=1}^{n} J \big(\frac{i}{n+1}\big)}$. 

Table \ref{tab:1} compares the jackknife estimate of variance and the asymptotic variance estimates of the smoothly trimmed mean, based on 10,000 repetitions. We can see that both variance estimation methods give similar results. However, the computational time for asymptotic variance is much faster than that of jackknife variance. Column \textit{time ratio} shows the time ratio between the jackknife variance estimation and the asymptotic variance estimation. For example, estimating the asymptotic variance for the smoothly trimmed mean with parameters $\alpha = 0.1$ and $\gamma = 0.2$ of $0.9N(0,1) + 0.1N(0,25)$ sample of size $N = 50$ is 26.85 times faster than estimating the variance by the jackknife method (our computations were done on computer with CPU 2.20 GHz and memory 4.0GB).

\begin{table}[H]
\begin{center}
\caption{Average jackknife and asymptotic variance estimates of the smoothly trimmed mean based on 10,000 replications for two contaminated distributions.}
\label{tab:1} 
{\scriptsize
\begin{tabular}{c c c |c c c | ccc}
\hline
\multicolumn{3}{c}{\multirow{2}{*}{}} & \multicolumn{3}{c}{\multirow{2}{*}{0.9$N$(0,1) + 0.1$N$(0,25)}} & \multicolumn{3}{c}{\multirow{2}{*}{0.1$N$(-10,1) + 0.8$N$(0,1)+ 0.1$N$(10,1)}} \\
\multicolumn{9}{c}{} \\
\hline
 & $\alpha$ & $\gamma$ &  \hfil jack.var &  \hfil asym.var &  \hfil time ratio & \hfil jack.var & \hfil asym.var &\hfil  time ratio \\ 
 \hline
\multirow{4}{*} {$n=20$} & 0.05 & 0.10 & \hfil 1.07830 &  \hfil 1.06692 & \hfil 13.77 & \hfil 1.04319 & \hfil 1.03295 & \hfil 14.23 \\ 
 & 0.10 & 0.20 & \hfil 0.19703 & \hfil 0.19296 & \hfil 13.60 & \hfil 0.64366 & \hfil 0.63049 & \hfil 13.64 \\ 
 & 0.10 & 0.30 & \hfil 0.12821 & \hfil 0.12540 & \hfil 13.50 & \hfil 0.41261 & \hfil 0.40361 & \hfil 13.67 \\ 
 & 0.20 & 0.30 & \hfil 0.08348 & \hfil 0.08126 & \hfil 13.45 & \hfil 0.24063 & \hfil 0.23420 & \hfil 13.71 \\ 
 \hline
\multirow{4}{*} {$n=50$} & 0.05 & 0.10 & \hfil 0.14427 & \hfil 0.17159 & \hfil 27.50 & \hfil 0.40016 & \hfil 0.40864 & \hfil 27.30 \\ 
   & 0.10 & 0.20 & \hfil 0.03318 & \hfil 0.03268 & \hfil 26.85 & \hfil 0.15103 & \hfil 0.14882 & \hfil 25.79 \\ 
   & 0.10 & 0.30 & \hfil 0.03145 & \hfil 0.03097 & \hfil 26.43 & \hfil 0.09330 & \hfil 0.09191 & \hfil 26.33 \\ 
    & 0.20 & 0.30 &\hfil  0.03080 & \hfil 0.03030 & \hfil 27.42 & \hfil 0.04713 & \hfil 0.04638 & \hfil 27.25 \\ 
    \hline
 \multirow{4}{*} {$n=80$} & 0.05 & 0.10 & \hfil 0.05245 & \hfil 0.05196 & \hfil 36.50 & \hfil 0.25105 & \hfil 0.24872 & \hfil 37.85 \\ 
  & 0.10 & 0.20 & \hfil 0.01915 & \hfil 0.01894 & \hfil 37.76 & \hfil 0.07144 & \hfil 0.07070 & \hfil 38.80 \\ 
    & 0.10 & 0.30 & \hfil 0.01892 & \hfil 0.01872 & \hfil 37.39 & \hfil 0.04682 & \hfil 0.04633 & \hfil 37.33 \\ 
   & 0.20 & 0.30 & \hfil 0.01917 & \hfil 0.01896 & \hfil 37.96 & \hfil 0.02676 & \hfil 0.02646 & \hfil 38.62 \\ 
\end{tabular}}
\end{center}
\end{table}

\section{Empirical likelihood for smoothly trimmed mean}
Let 
\[
\mu_{ST} = \int_{-\infty}^{+\infty}xJ(F(x))dF(x) =  \int_0^1 J(u)F^{-1}(u)du
\]
be the asymptotic mean of $\overline{X}_{ST}$ and the weights $w_i$ are determined by the weight function (\ref{eq:svari3}). The empirical likelihood for the smoothly trimmed mean can be established using the weighted empirical likelihood introduced in (Glenn and Zhao, 2007). The proving technique is similar to the paper (Qin and Tsao, 2001), where empirical likelihood for the classical trimmed mean was introduced.

Let the profile empirical ratio function be defined as
\begin{equation*}
        R(\mu_{ST}) = \sup_{p_i} \Big\{ \prod_{i=r+1}^{n-r} \Big(\frac{p_i}{w_i} \Big)^{mw_i}: \sum_{i=r+1}^{n-r} p_i = 1, \sum_{i=r+1}^{n-r} w_i = 1, \sum_{i=r+1}^{n-r} p_iX_i = \mu_{ST} \Big\}, 
\end{equation*}
where $\alpha$ is fixed, $r = [n\alpha]$, $m = n-2r$ and $p_i \geq 0$, $w_i \geq 0$.
Using Lagrange multipliers, we get  
\begin{equation}
\label{peq}
    p_i = \frac{w_i}{1+\lambda W_{(i)}},
\end{equation}
where $W_{(i)} = X_{(i)} - \mu_{ST}$ and  $\lambda$ is the root of
\begin{equation}
\label{lameq}
    \sum_{i=r+1}^{n-r} \frac{w_i W_{(i)}}{1+\lambda W_{(i)}} =0.  
\end{equation}
\noindent Empirical likelihood ratio at $\mu_{ST}$ is
\begin{equation*}
    R(\mu_{ST}) = \prod_{i=r+1}^{n-r} \Big(\frac{p_i}{w_i} \Big)^{mw_i} = \prod_{i=r+1}^{n-r} \Big( \frac{1}{1+\lambda W_{(i)}} \Big )^{m w_i}
\end{equation*}
and the respective test statistic is defined as
\begin{equation}
\label{emp1}
    l(\mu_{ST}) = -2\log R(\mu_{ST}) = 2 \sum_{i=r+1}^{n-r} m w_i \log(1 +\lambda W_{(i)}).
\end{equation}
\begin{theorem}
\label{teo2}
Let $X_1, \ldots, X_n$ be iid random variables with a common cdf $F$. If 
\begin{enumerate}[label=(\roman*)]
    \item $F$ is continuous,
    \item $F'(\xi_{\alpha}) >0$ and $F'(\xi_{1-\alpha}) >0$,
    \item $J(u)$ is bounded and continuous
\end{enumerate} then
\begin{equation}\label{statistic}
    al(\mu_{ST}) \xrightarrow{d} \chi_1^2,
\end{equation}
where
\begin{equation*}
\begin{aligned}
   & a = \frac{\sigma_{ST}^2}{(1-2\alpha) D(\overline{X}_{ST}) n} , \quad \sigma_{ST}^2 =\int_{\xi_{\alpha}}^{\xi_{1-\alpha}} J(u)x^2 dF(x) - \mu_{ST}^2 
\end{aligned}    
\end{equation*}
with $\xi_p = F^{-1}(p)$ and $D(\overline{X}_{ST})$ defined in (\ref{stm_var}).
\end{theorem}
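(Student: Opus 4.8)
The plan is to adapt Owen's empirical likelihood argument to the weighted, trimmed setting, following Glenn and Zhao (2007) for the weighting and Qin and Tsao (2001) for the trimming, and to evaluate the ratio statistic at the true value $\mu_{ST}$. Write $S_n=\sum_{i=r+1}^{n-r}w_iW_{(i)}$ and $T_n=\sum_{i=r+1}^{n-r}w_iW_{(i)}^2$, so that $S_n$ is the centred weighted L-statistic and \eqref{lameq} is the score equation $\sum_i w_iW_{(i)}/(1+\lambda W_{(i)})=0$. The structural point that makes the whole argument work is that only the central order statistics $X_{(r+1)},\dots,X_{(n-r)}$ enter: conditions (i)--(ii) give $X_{(r+1)}\to\xi_\alpha$ and $X_{(n-r)}\to\xi_{1-\alpha}$ in probability, so $\max_{r+1\le i\le n-r}|W_{(i)}|=O_p(1)$ and the heavy-tail obstructions of ordinary empirical likelihood are eliminated by the trimming, while condition (iii) makes the weights $w_i$ uniformly $O(n^{-1})$ with $\sum_i w_i=1$.

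First I would pin down the rate of $\lambda$. Combining the uniform bound on $|W_{(i)}|$, the positivity of $\lim T_n$, and the order $S_n=O_p(n^{-1/2})$ (established from the influence-function CLT below) with the standard inequality obtained by multiplying \eqref{lameq} by $\lambda$, one gets $|\lambda|=O_p(n^{-1/2})$. With this rate a Taylor expansion of \eqref{lameq} yields $\lambda=S_n/T_n+o_p(n^{-1/2})$, and expanding the logarithms in \eqref{emp1} via $\log(1+\lambda W_{(i)})=\lambda W_{(i)}-\tfrac12\lambda^2W_{(i)}^2+o_p(n^{-1})$ collapses the statistic to the quadratic form $l(\mu_{ST})=m\,S_n^2/T_n+o_p(1)$, where $m=n-2r$ and the factors $2$ and $\tfrac12$ cancel.

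It then remains to identify the two limits and extract $a$. The denominator $T_n$ obeys a weighted law of large numbers for the central order statistics and converges in probability to a positive constant, the $J(F(\cdot))$-weighted second moment over $[\xi_\alpha,\xi_{1-\alpha}]$, which is the quantity encoded by $\sigma_{ST}^2$. For the numerator I would invoke Theorem \ref{theorem1}: its influence function supplies a Hoeffding--Bahadur linearisation $\sqrt{n}\,S_n=n^{-1/2}\sum_{j}\psi(X_j)+o_p(1)$ with a bounded score $\psi$ built from $IF(\cdot,T,F)$, so that $\sqrt{n}\,S_n\xrightarrow{d}N(0,V)$ for a finite $V$ tied to $D(\overline{X}_{ST})$ through the weight normalisation. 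Since $m/n\to 1-2\alpha$ and $nS_n^2\xrightarrow{d}V\chi_1^2$, the quadratic form satisfies $l(\mu_{ST})\xrightarrow{d}\kappa\,\chi_1^2$ with $\kappa=(1-2\alpha)V/\lim T_n$; tracking every normalisation constant then turns $\kappa^{-1}$ into the stated $a=\sigma_{ST}^2/\big((1-2\alpha)D(\overline{X}_{ST})n\big)$, which is \eqref{statistic}.

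The main obstacle is that, in contrast to ordinary empirical likelihood, the statistic is \emph{not} self-normalising. The dispersion $T_n$ that the likelihood places in the denominator is a within-window quantity, whereas the genuine asymptotic variance of the weighted L-statistic in the numerator is $D(\overline{X}_{ST})$ of \eqref{stm_var}, which carries in addition the boundary-quantile contributions $\alpha I^2$ and $\alpha(E_4-I)^2$ produced by the trimming. Because these two variances disagree, the limit is a scaled rather than a standard chi-square, and $a$ is exactly their ratio after accounting for the prefactor $m$ and the weight normalisation $\sum_i w_i=1$ --- the same correction phenomenon found by Qin and Tsao for the ordinary trimmed mean. Making this rigorous, in particular justifying the Bahadur linearisation of the weighted L-statistic uniformly near the trimming quantiles $\xi_\alpha,\xi_{1-\alpha}$ (where conditions (ii)--(iii) are essential) and controlling the remainder and cross terms in the expansions of \eqref{lameq} and \eqref{emp1}, is the delicate part of the argument.
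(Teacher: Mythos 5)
Your proposal is correct and follows essentially the same route as the paper's proof: establish $\max_{r+1\le i\le n-r}|W_{(i)}|=O_p(1)$ from the trimming, obtain $|\lambda|=O_p(n^{-1/2})$, Taylor-expand $l(\mu_{ST})$ into the quadratic form $m\big(\sum_i w_iW_{(i)}\big)^2\big(\sum_i w_iW_{(i)}^2\big)^{-1}+o_p(1)$, and conclude by a CLT for the weighted sum, a law of large numbers for the weighted second moment, and Slutsky's lemma, with the scaling constant $a$ emerging exactly as you describe from the mismatch between the within-window dispersion and $D(\overline{X}_{ST})$. The only difference is one of sourcing: where you propose to derive the CLT for $\sum_i w_iW_{(i)}$ yourself via a Bahadur-type linearisation built on the influence function of Theorem \ref{theorem1}, the paper simply cites Stigler (1974) for the asymptotic normality of the smoothly trimmed mean and Barry \emph{et al.} (2008) for $\sum_i w_iW_{(i)}^2\xrightarrow{p}\sigma_{ST}^2$.
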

\begin{proof}
The proof of Theorem \ref{teo2} is given in the Appendix. 
\end{proof}
The estimator of scaling constant $a$ is
\begin{equation*}
    \hat{a} = \frac{\widehat{\sigma}_{ST}^2}{(1-2\alpha) \widehat{D}(\overline{X}_{ST}) n },
\end{equation*}
where 
\begin{equation*}
    \widehat{\sigma}_{ST}^2 = \sum_{i = r+1}^{n-r} w_i (X_{(i)} - \overline{X}_{ST})^2
\end{equation*} \\
and $\widehat{D}(\mu_{ST})$ is defined in (\ref{stm_var_estim}). A $100(1-\alpha)$\% empirical likelihood confidence interval is formed by taking those values $\mu_{ST}$ for which $al(\mu_{ST}) \leq \chi_{(1-\alpha),1}^2$.


\section{Simulation study}

To test the asymptotic behavior of the methods proposed in this paper, we perform a simulation study analyzing the empirical coverage accuracy. For this purpose, we generate data from the mixture distribution functions $0.1N(-10,1) + 0.8N(0,1) + 0.1N(10,1)$ and $0.8N(0,1) + 0.2N(0,25)$. Samples generated from these distribution functions will have gaps at quantiles $0.1$ and $0.9$.

\par Figures \ref{fig:fig3} and \ref{fig:fig5} show the simulated results of the coverage accuracy and the average length of the $95\%$ empirical likelihood and normal approximation based confidence intervals for the trimmed and the smoothly trimmed means, based on 10000 replicates. Data were generated from two symmetric distributions: $0.1N(-10,1) + 0.8N(0, 1) + 0.1N(10,1)$ and $0.8N(0,1) + 0.2N(0, 25)$. The first contaminated distribution contains outliers and the second also contains outliers but without visible gaps. Both selected distributions contain 20\% of contamination. $STM_{\gamma}$ denotes the smoothly trimmed mean estimator and $TM$ denotes the trimmed mean estimator. The trimming proportion $\alpha$ is displayed on the $x$-axis, while the $\gamma$ parameter is set as a constant. 
\par When the sample size is small, the simulations clearly show that the empirical coverage accuracy is not as accurate when the trimming proportion is close to the contamination level. As the sample size increases, the coverage accuracy of the confidence intervals for the trimmed mean remains well above 95\%. However, the coverage accuracy of the confidence interval for the smoothly trimmed mean is much closer to 95\%, and it can be observed that the higher the parameter $\gamma$, the closer the coverage accuracy is to 95\%.
\par Now consider the distribution $0.1N(-10,1) + 0.8N(0,1) + 0.1N(10,1)$. From the figure \ref{fig:fig5} we can see that in the case of a symmetrical 5\% or 15\% trimming, the coverage accuracy is close to 95\% for both the trimmed and smoothly trimmed means. However, the average interval length is much higher for the trimmed mean. The empirical likelihood method for smoothly trimmed mean performs marginally better in the case of $10\%$ symmetrical trimming. Furthermore, similar to Figure \ref{fig:fig3}, we can see that the smoothly trimmed mean has a significantly better coverage accuracy for the 10\% symmetrical trimming. Table 2 shows that the trimmed mean does not converge to the normal distribution for the 10\% symmetrical trimming, while the smoothly trimmed mean converges better to the normal distribution $0.95$ quantile ($z_{0.95} \approx 1.96$) with larger sample sizes. 

\newpage
\begin{figure}[h!]
\begin{center}
\includegraphics[scale=0.5]{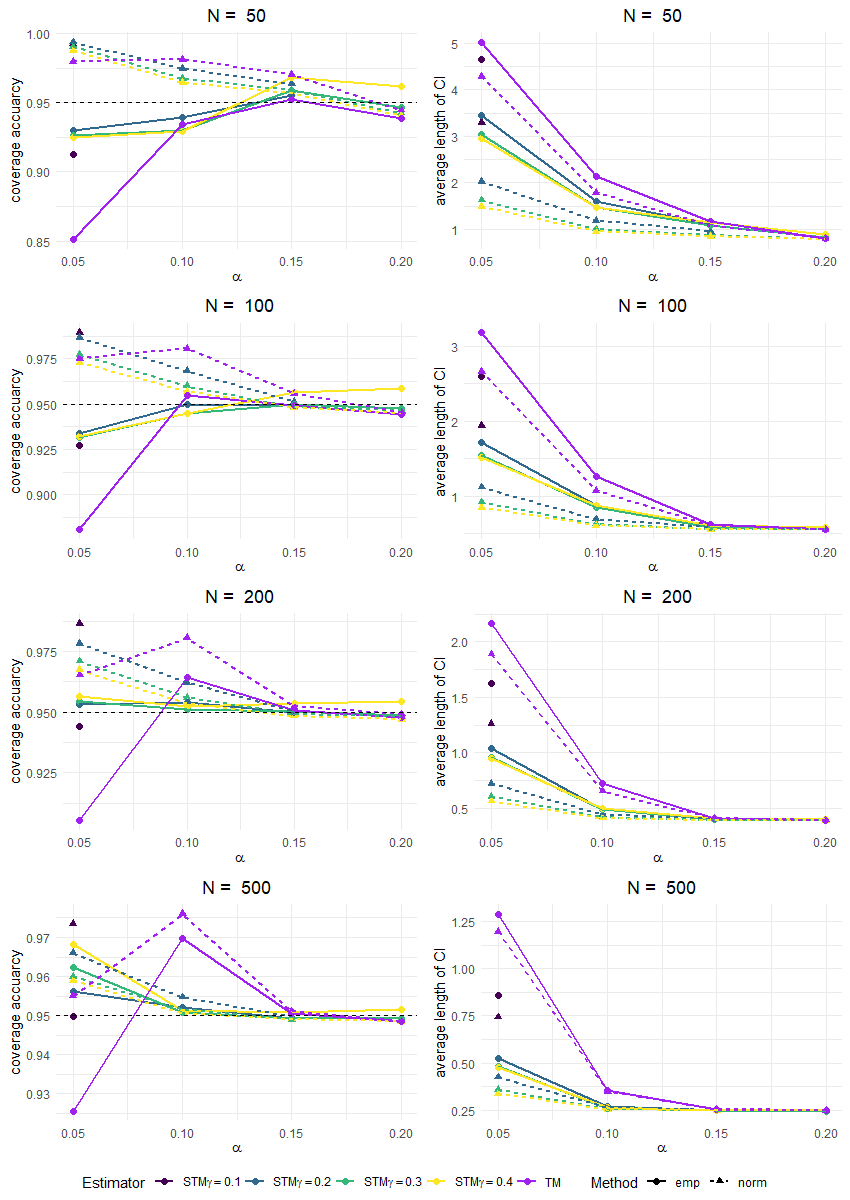}
\caption{Coverage accuracy and average length of 95\% normal approximation and empirical likelihood based confidence intervals, with samples generated from the following mixture distribution function $0.8N(0,1) + 0.2N(0, 25)$, based on 10000 replications.}
\label{fig:fig3}
\end{center}
\end{figure}
\newpage
\begin{figure}[h!]
\begin{center}
\includegraphics[scale=0.5]{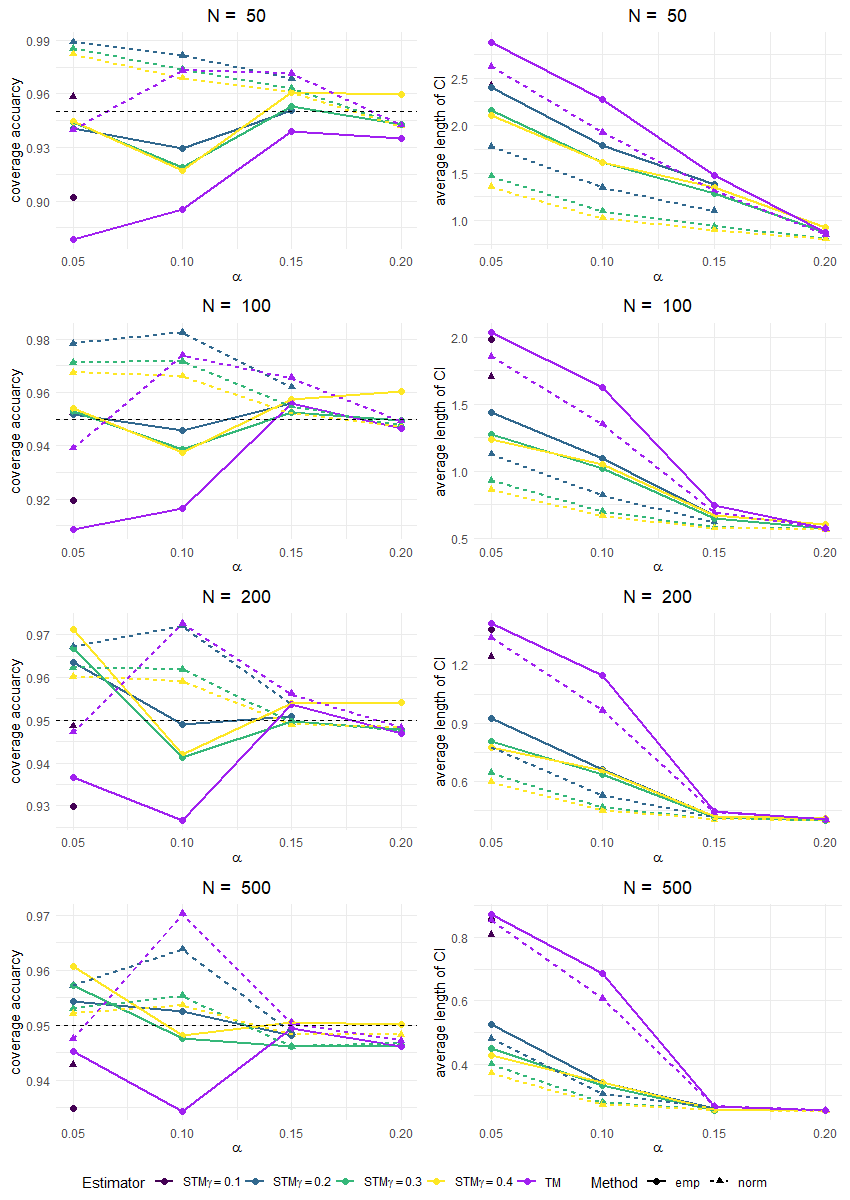}
\caption{Coverage accuracy and average length of 95\% normal approximation and empirical likelihood based confidence intervals, with samples generated from the following mixture distribution function $0.1N(-10,1) + 0.8N(0, 1) + 0.1N(10,1)$, based on 10000 replications. }
\label{fig:fig5}
\end{center}
\end{figure}

\newpage
\begin{table}[h!]
\caption{Empirical $0.95$ quantiles of the smoothly trimmed mean (STM) and the trimmed mean (TM) statistics based on normal approximation. Samples are generated from the mixture distribution $0.1N(-10,1) + 0.8N(0, 1) + 0.1N(10,1)$, based on $10'000$ replications with the true value equal to $1.96$.}
\label{tab:2} 
\centering
{\scriptsize
\begin{tabular}{c c c |c c }
\hline
$n$& $\alpha$ & $\gamma$ &  \hfil STM &  \hfil TM  \\ 
 \hline
                \multirow{4}{*}{100}&    \multirow{4}{*}{0.05} & 0.10 & \hfil 1.9513 & \hfil 2.0742 \\ 
                &           & 0.20 & \hfil 1.7332 & \hfil- \\ 
                 &        & 0.30 & \hfil 1.7915 & \hfil - \\ 
                 &         & 0.40 & \hfil 1.8322 & \hfil -  \\ 
 \hline
                 \multirow{3}{*}{100}&         \multirow{3}{*}{0.10} & 0.20 & \hfil 1.6798 & \hfil1.7456 \\ 
                 &          & 0.30 & \hfil 1.7928 & \hfil -  \\ 
                 &        & 0.40 & \hfil 1.8422 & \hfil -   \\
 \hline
                 \multirow{3}{*}{100}&         \multirow{3}{*}{0.15} & 0.20 & \hfil1.8866 & \hfil1.8579    \\ 
                 &          & 0.30 & \hfil1.9402 & \hfil - \\ 
                 &         & 0.40 & \hfil1.9556 & \hfil -  \\
 \hline
                 \multirow{2}{*}{100}&      \multirow{2}{*}{0.20} & 0.30 & \hfil1.9896 & \hfil1.9873  \\ 
                 &         & 0.40 & \hfil 2.0107 & \hfil -  \\ 

\hline
\hline
      \multirow{4}{*}{500}&   \multirow{4}{*}{0.05} & 0.10 & \hfil 2.0029 & \hfil 1.9549\\ 
                &           & 0.20 & \hfil 1.9138 & \hfil -\\ 
                 &        & 0.30 & \hfil 1.9231 & \hfil - \\ 
                 &         & 0.40 & \hfil 1.9270 & \hfil -  \\ 
 \hline
                  \multirow{3}{*}{500}&         \multirow{3}{*}{0.10} & 0.20 & \hfil 1.8756 & \hfil 1.7861  \\ 
                 &          & 0.30 & \hfil 1.9234 & \hfil -  \\ 
                 &        & 0.40 & \hfil 1.9403 & \hfil -   \\
 \hline
                  \multirow{3}{*}{500}&         \multirow{3}{*}{0.15} & 0.20 & \hfil1.9979 & \hfil 1.9897  \\ 
                 & & 0.30 & \hfil 1.9816 & \hfil - \\ 
                 &        & 0.40 & \hfil 1.9857 & \hfil- \\
 \hline
                  \multirow{2}{*}{500}&      \multirow{2}{*}{0.20} & 0.30 & \hfil1.9775 & \hfil 1.9940  \\ 
                 &        & 0.40 & \hfil 1.9607 & \hfil -  \\ 

\end{tabular}}
\end{table}

\section{Real data examples}
\par In this section we construct $95\%$ confidence intervals for the trimmed and smoothly trimmed means with different trimming and smoothing parameters. The data sets have been selected from the {\tt{$R$ library MASS}}. The data describe measurements of the presence of silicon dioxide ($Si O_2$) in 76 pieces of non-float car window glass and aluminium oxide ($Al_2O_3$) in 70 pieces of float glass. As the data contains outliers, it is advisable to use L-estimators to infer the location parameters. Our aim is to compare the trimmed and smoothly trimmed mean confidence intervals with different trimming and smoothing parameters. We also consider the optimal trimming ratio that minimises the variance of the different estimators.

\begin{figure}[h!]
\begin{center}
\includegraphics[scale=0.35]{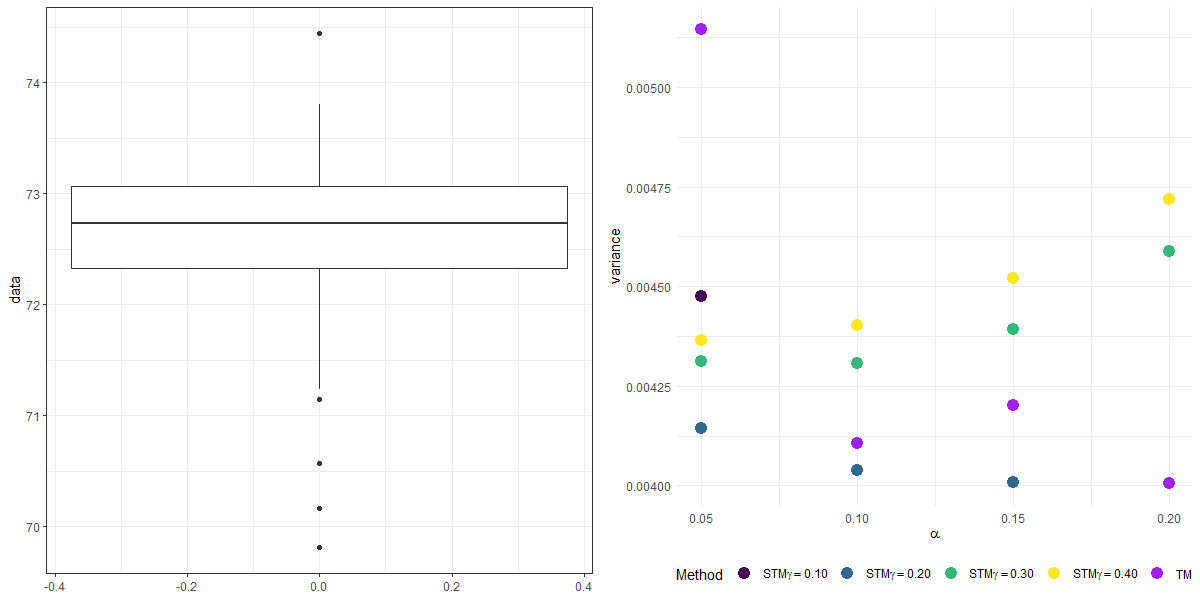}
\caption{Box plot (left panel) and asymptotic variance estimates for the trimmed and the smoothly trimmed mean with different trimming and smoothing parameters for the $Si O_2$ data (right panel).}
\label{fig:figboxplot}
\end{center}
\end{figure}

\par Figure \ref{fig:figboxplot} shows the boxplot of the $Si O_2$ dataset, which contains less than 10\% outliers. The right panel of this figure shows the asymptotic variance estimates for the trimmed and smoothly trimmed mean with different proportions of trimming and smoothing. In figure \ref{fig:figsi} we can see the $95\%$ percentile bootstrap, empirical likelihood and normal approximation based confidence intervals for the trimmed and smoothly trimmed mean, the confidence intervals for the $t$ test and the Wilcoxon test. The confidence intervals marked with dashed lines correspond to the location estimators with the shortest confidence interval.

\begin{figure}[h!]
\begin{center}
\includegraphics[scale=0.5]{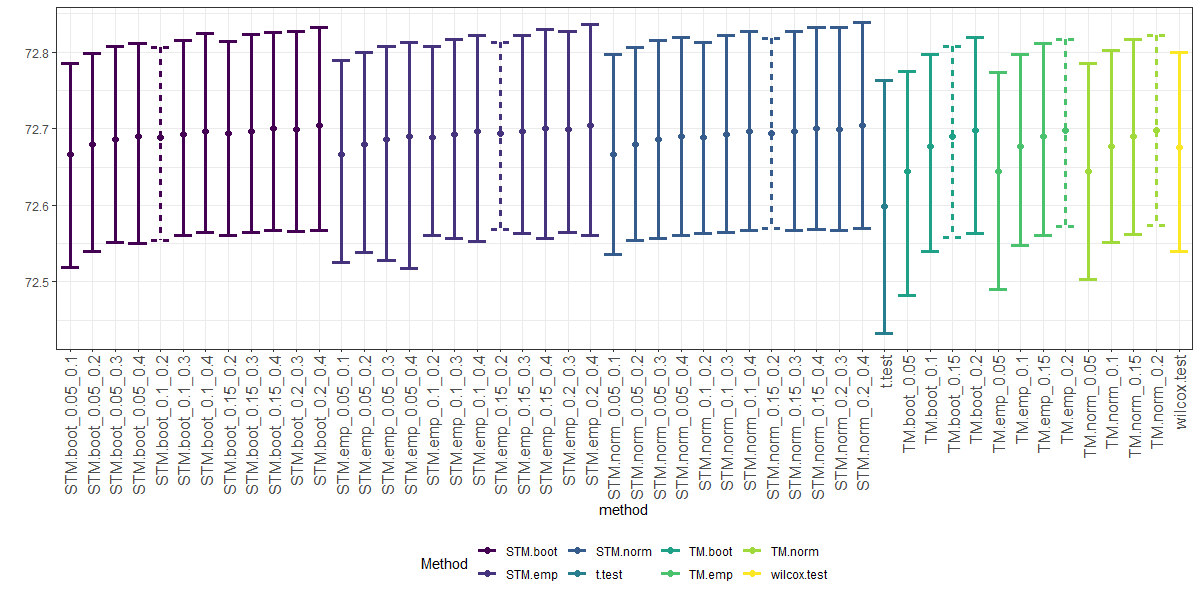}
\caption{$95\%$ bootstrap, normal approximation and empirical likelihood confidence intervals for the trimmed and the smoothly trimmed mean with different trimming and smoothing parameters for the $Si O_2$ data.}
\label{fig:figsi}
\end{center}
\end{figure}
Figure \ref{fig:figsi} shows that the optimal trimming parameter is $\alpha = 0.2$ for the trimmed mean and $\alpha = 0.15$ and $\gamma = 0.2$ for the smoothly trimmed mean. Although the smallest asymptotic variance estimate is for the trimmed mean, smoothly trimmed means with parameters $\alpha = 0.15$ and $\gamma = 0.2$ or $\alpha = 0.1$ and $\gamma = 0.2$ give very close results to the trimmed mean with optimal $\alpha$. To trim as little as possible, one of the versions of the smoothly trimmed mean could be chosen.

 \par Figures \ref{fig:al_var} and \ref{fig:figal} show asymptotic variance estimates and confidence intervals for the trimmed and smoothly trimmed means for the $Al_2O_3$ data. In the figure \ref{fig:al_var} we can see that the lowest asymptotic variance estimates for the trimmed and smoothly trimmed means are at different trimming ratios. The asymptotic variance estimate for the 20\% trimmed mean is larger than that for the 10\% trimmed mean. Therefore, if we had chosen the trimming level recommended in the literature, 20\%, we would not have obtained the estimator with the smallest asymptotic variance estimate. 
 
\par Figure \ref{fig:figal} shows confidence intervals for $Al_2O_3$ data at different trimming and smoothing parameters. We can see that for the smoothly trimmed mean, the shortest confidence interval for the EL method at different trimming and smoothing parameters is shorter than the shortest confidence intervals for the normal approximation-based and bootstrap percentile confidence intervals. In contrast, for a trimmed mean, the bootstrap percentile method gives a shorter confidence interval with a different amount of trimming than the other two methods. Also, the data are not symmetric, so the confidence intervals obtained by the EL method or the bootstrap may be more accurate. 

\begin{figure}[H]
\begin{center}
\includegraphics[scale=0.4]{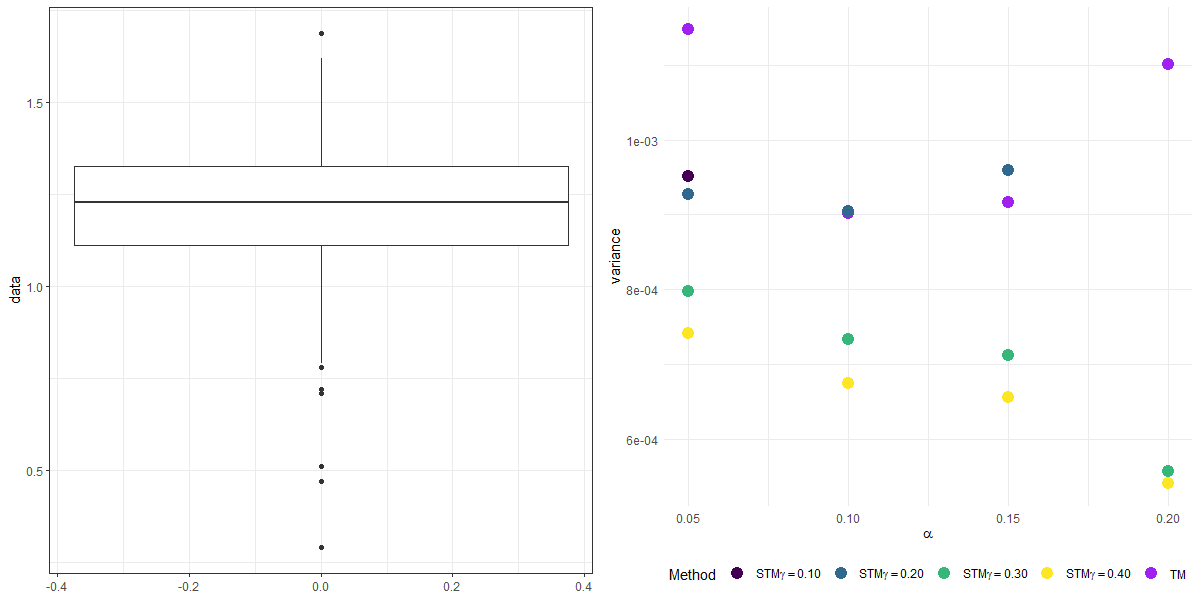}
\caption{Boxplot (left panel) and asymptotic variance estimates for the trimmed and the smoothly trimmed mean with different trimming and smoothing parameters for the $Al_2O_3$ data (right panel).}
\label{fig:al_var}
\end{center}
\end{figure}

\begin{figure}[H]
\begin{center}
\includegraphics[scale=0.42]{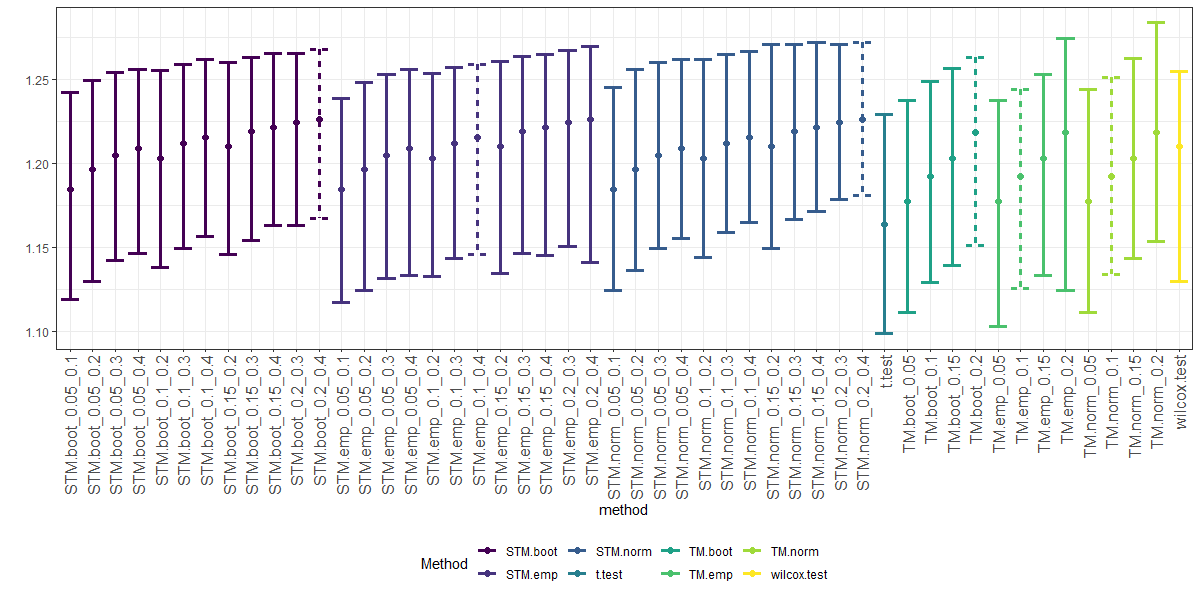}
\caption{$95\%$ bootstrap, normal approximation and empirical likelihood confidence intervals for the trimmed and the smoothly trimmed mean with different trimming and smoothing parameters for the $Al_2O_3$ data.}
\label{fig:figal}
\end{center}
\end{figure}

\section{Conclusions}
A new version of the smoothly trimmed mean has been introduced, which has a clear advantage over the classical trimmed mean. In general, instead of using a fixed trimming rate $\alpha$, one could always choose an additional smoothing parameter $\gamma$ for the smoothly trimmed mean to obtain a more consistent procedure. Furthermore, we derive its asymptotic variance, which is not only a theoretically new result, but also gives a computational advantage. The simulation study confirms that the trimmed mean has poor asymptotic behaviour when the trimming is done close to the gaps of the underlying distribution. The smoothly trimmed mean, on the other hand, overcomes this disadvantage and therefore generally has an advantage over the classical trimmed mean. A smoothly trimmed mean can also be used to reduce the amount of trimming, thus preserving as much of the original structure of the data as possible, while still obtaining an estimator with a small variance.

When using empirical likelihood, the construction of confidence intervals does not require the estimation of variance. It is a nonparametric method, thus we do not need to make distributional assumptions about the data. However, to use the empirical likelihood method one must additionally estimate the scaling constant in \ref{statistic}. The merits and advantages of empirical likelihood in this case remain ambiguous, although our simulations (Figures \ref{fig:fig3} and \ref{fig:fig5}) revealed some benefits. 

\vspace{0.5cm}

\noindent \textbf{Conflict of Interest}

\noindent The authors have declared no conflict of interest.

\section*{Appendix }

\subsection*{A.1. Proof of Theorem 1}
The weight function $J(u)$ for smoothly trimmed mean used in calculation is given by (\ref{eq:svari3}). Let us calculate the first term in (\ref{eq:infl}).

\begin{itemize}
\item If $x < F^{-1}(\alpha)$ then
\begin{equation*}
    \int_{-\infty}^x J(F(y)) dy = 0.
\end{equation*}

\item If $ F^{-1}(\alpha)  <  x < F^{-1}(\gamma)$, then
\begin{equation*}
\begin{aligned}
E_1 & = \int_{-\infty}^x J(F(y)) dy  = \int_{F^{-1} (\alpha)}^x \frac{F(y) - \alpha}{\gamma - \alpha} dy \\
& = \frac{1}{\gamma - \alpha} \int_{F^{-1} (\alpha)}^x (F(y) - \alpha) dy \\
& = \frac{1}{\gamma - \alpha} \Big( F(x) x  -  \int_{\alpha}^{F(x)} F^{-1}(u)du - \alpha x \Big ).
\end{aligned}
\end{equation*}

\item If $F^{-1}(\gamma)  <  x < F^{-1}(1 - \gamma)$, then
\begin{equation*}
\begin{aligned}
E_2 & = \int_{-\infty}^x J(F(y)) dy  = \int_{F^{-1} (\alpha)}^{F^{-1}(\gamma)} \frac{F(y) - \alpha}{\gamma - \alpha} dy + \int_{F^{-1}(\gamma)}^{x} dy \\
& =  \frac{1}{\gamma - \alpha} \Big( (\gamma - \alpha) F^{-1}(\gamma)- \int_{\alpha}^{\gamma} F^{-1}(u)du  \Big) + x - F^{-1}(\gamma).
\end{aligned}
\end{equation*}

\item If $F^{-1}(1-\gamma)  <  x < F^{-1}(1 - \alpha)$, then
\begin{equation*}
\begin{aligned}
E_3 & = \int_{-\infty}^x J(F(y)) dy = \int_{F^{-1} (\alpha)}^{F^{-1}(\gamma)} \frac{F(y) - \alpha}{\gamma - \alpha} dy + \int_{F^{-1}(\gamma)}^{F^{-1}(1-\gamma)} dy \\
& +  \int_{F^{-1}(1-\gamma)}^x \frac{1-F(y) -\alpha}{\gamma - \alpha} dy \\
& = \frac{1}{\gamma - \alpha}\Big( (\gamma - \alpha) F^{-1}(\gamma) - \int_{\alpha}^{\gamma} F^{-1}(u)du  \Big) + F^{-1}(1-\gamma) \\
& - F^{-1}(\gamma)  + \frac{1}{\gamma-\alpha}\Big ( (1 - \alpha)x + (\alpha - \gamma) F^{-1}(1-\gamma) - F(x)x \\
& +  \int_{1-\gamma}^{F(x)}F^{-1}(u)du \Big).
\end{aligned}
\end{equation*}

\item If $x > F^{-1}(1 - \alpha)$, then
\begin{equation*}
\begin{aligned}
E_4 & = \int_{-\infty}^x J(F(y)) dy  = \int_{F^{-1} (\alpha)}^{F^{-1}(\gamma)} \frac{F(y) - \alpha}{\gamma - \alpha} dy + \int_{F^{-1}(\gamma)}^{F^{-1}(1-\gamma)} dy \\
& +  \int_{F^{-1}(1-\gamma)}^{F^{-1}(1-\alpha)} \frac{1-F(y) -\alpha}{\gamma - \alpha} dy \\
& = \frac{1}{\gamma - \alpha} \Big( (\gamma - \alpha) F^{-1}(\gamma)- \int_{\alpha}^{\gamma} F^{-1}(u)du  \Big) + F^{-1}(1-\gamma) \\
& - F^{-1}(\gamma) + \frac{1}{\gamma-\alpha}\Big((\alpha - \gamma) F^{-1}(1-\gamma)+ \int_{1-\gamma}^{1-\alpha}F^{-1}(u)du \Big).
\end{aligned}
\end{equation*}

\end{itemize}
Let us calculate the second term in (\ref{eq:infl}).
\begin{equation}
\label{eq:infl_fun}
\begin{aligned}
    \int_{-\infty}^{\infty}(1-F(y))J(F(y))dy & = \int_{F^{-1}(\alpha)}^{F^{-1}(\gamma)} (1-F(y)) \Big (\frac{F(y)-\alpha}{\gamma-\alpha}\Big ) dy\\
    & + \int_{F^{-1}(\gamma)}^{F^{-1}(1-\gamma)} (1-F(y))dy \\
    & + \int_{F^{-1}(1-\gamma)}^{F^{-1}(1-\alpha)} (1-F(y)) \Big (\frac{1-F(y)-\alpha}{\gamma-\alpha}\Big )dy.
\end{aligned}
\end{equation}
First countable in (\ref{eq:infl_fun}) is
\begin{equation}
\begin{aligned}
\label{eq:term1}
   & \int_{F^{-1}(\alpha)}^{F^{-1}(\gamma)} (1-F(y)) \Big (\frac{F(y)-\alpha}{\gamma-\alpha}\Big )dy \\
   & = \frac{1}{\gamma-\alpha} \int_{F^{-1}(\alpha)}^{F^{-1}(\gamma)} \Big (F(y) - \alpha - F^2(y) + \alpha F(y) \Big ) dy,  
\end{aligned}
\end{equation}
where
\begin{align*}
    & \int_{F^{-1}(\alpha)}^{F^{-1}(\gamma)} (1 + \alpha) F(y)dy = (1 + \alpha) \Big (F^{-1}(\gamma)\gamma - F^{-1}(\alpha)\alpha - \int_{\alpha}^{\gamma} F^{-1}(u)du \Big ), \\
    &  \int_{F^{-1}(\alpha)}^{F^{-1}(\gamma)}\alpha dy = \alpha \Big (F^{-1}(\gamma) - F^{-1}(\alpha) \Big),\\
    &  \int_{F^{-1}(\alpha)}^{F^{-1}(\gamma)} F^2(y) dy = F^{-1}(\alpha)\alpha^2 - F^{-1}(\gamma)\gamma^2 + 2 \int_{\alpha}^{\gamma}u  F^{-1}(u) du. 
\end{align*}
Second countable in (\ref{eq:infl_fun}) is
\begin{equation}
\label{eq:term2}
   \int_{F^{-1}(\gamma)}^{F^{-1}(1-\gamma)} (1-F(y))dy = \int_{\gamma}^{1-\gamma} F^{-1}(u)du +  F^{-1}(1-\gamma)\gamma + F^{-1}(\gamma)\gamma - F^{-1}(\gamma) 
\end{equation}
and third countable is
\begin{equation}
\begin{aligned}
\label{eq:term3}
    \int_{F^{-1}(1-\gamma)}^{F^{-1}(1-\alpha)} & (1-F(y))  \Big (\frac{1-F(y)-\alpha}{\gamma-\alpha}\Big )dy = \\ & = \frac{1}{\gamma-\alpha} \int_{F^{-1}(1-\gamma)}^{F^{-1}(1-\alpha)} \Big (1- 2F(y) - \alpha + F^2(y) + \alpha F(y) \Big ) dy,
    \end{aligned}
\end{equation}
where
\begin{align*}
     & \int_{F^{-1}(1-\gamma)}^{F^{-1}(1-\alpha)} (1 - \alpha) dy = (1 - \alpha) \Big (F^{-1}(1-\gamma) - F^{-1}(1-\alpha) \Big ), \\
     &  \int_{F^{-1}(1-\gamma)}^{F^{-1}(1-\alpha)} (\alpha - 2 )F(y)dy  = (\alpha - 2) \Big ( F^{-1}(1-\alpha)(1-\alpha) \\
     & \hphantom{\int_{F^{-1}(1-\gamma)}^{F^{-1}(1-\alpha)}} - F^{-1}(1-\gamma)(1-\gamma) - \int_{1-\gamma}^{1-\alpha} F^{-1}(u) \Big ) du, \\
     & \int_{F^{-1}(1-\gamma)}^{F^{-1}(1-\alpha)} F^2(y)dy =  F^{-1}(1-\alpha)(1-\alpha)^2 - F^{-1}(1-\gamma)(1-\gamma)^2 \\
     & \hphantom{\int_{F^{-1}(1-\gamma)}^{F^{-1}(1-\alpha)}} - 2\int_{1-\gamma}^{1-\alpha}u F^{-1}(u)du.
\end{align*}

Adding (\ref{eq:term1}), (\ref{eq:term2}) and (\ref{eq:term3}) gives
\begin{equation*}
\begin{aligned}
 I & = \int_{-\infty}^{\infty}(1-F(y))J(F(y))dy \\
& = \frac{1}{\gamma-\alpha} \Big (  (\gamma +\alpha \gamma - \gamma^2 - \alpha) F^{-1}(\gamma) - (1+\alpha)\int_{\alpha}^{\gamma} F^{-1}(u)du  \\
& +  2 \int_{\alpha}^{\gamma}u F^{-1}(u)du  + (2-\alpha) \int_{1-\gamma}^{1-\alpha} F^{-1}(u)du + (\alpha\gamma -\gamma^2)F^{-1}(1-\gamma) \\
& - 2 \int_{1-\gamma}^{1-\alpha} u F^{-1}(u)du  \Big ) +  \int_{\gamma}^{1-\gamma} F^{-1}(u)du +  F^{-1}(1-\gamma)\gamma \\
& + F^{-1}(\gamma)\gamma - F^{-1}(\gamma).
\end{aligned}
\end{equation*}

Combining both terms of (\ref{eq:infl}), we get the influence function for the smoothly trimmed mean in the form
\begin{equation*}
\begin{aligned}
IF(x, T, F) = \left\{ \begin{array}{lllll}
-I, & x < F^{-1}(\alpha) \\
E_1 -I, & F^{-1}(\alpha) < x < F^{-1}(\gamma) \\
E_2 -I,  &  F^{-1}(\gamma) < x < F^{-1}(1-\gamma) \\
\begin{aligned}
E_3 -I \end{aligned},& F^{-1}(1-\gamma) < x < F^{-1}(1-\alpha)\\
\begin{aligned}
E_4 -I \end{aligned}, & x > F^{-1}(1-\alpha) \end{array} \right.
\end{aligned}
\end{equation*}
and the asymptotic variance of the smoothly trimmed mean is 
\begin{equation*}
\begin{aligned}
    D(T_n) & = \alpha I^2 + \int^{F^{-1}(\gamma)}_{F^{-1}(\alpha)} \Big( E_1 -I \Big )^2 dF(x) + \int^{F^{-1}(1-\gamma)}_{F^{-1}(\gamma)} \Big( E_2 -I \Big )^2 dF(x) \\
    & + \int^{F^{-1}(1-\alpha)}_{F^{-1}(1-\gamma)} \Big( E_3 -I \Big )^2 dF(x) +
    \alpha (E_4 - I)^2.
\end{aligned}
\end{equation*}

\subsection*{A.2. Proof of Theorem 2}

The following results are needed to prove the Theorem \ref{teo2}.

\begin{theorem}(Stigler, 1974)\label{teo3}
Assume that $\mathrm{E}(X^2) < \infty$ and $J(u)$ is bounded and continuous a.e. $F^{-1}$ and $D(\overline{X}_{ST}) > 0$ then 
\begin{equation*}
    \frac{\overline{X}_{ST} - \mu_{ST}}{\sqrt{D(\overline{X}_{ST})}} \xrightarrow{d} Z \sim N(0,1).
\end{equation*}
\end{theorem}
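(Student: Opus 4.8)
The plan is to treat $\overline{X}_{ST}$ as an L-statistic with a smooth, bounded weight function supported on $[\alpha,1-\alpha]$ and to establish asymptotic normality by the von Mises / influence-function linearization, reusing the influence function already computed in Theorem \ref{theorem1}. Concretely, I would introduce the statistical functional $T(F)=\int_0^1 F^{-1}(u)J(u)\,du=\mu_{ST}$ and view $\overline{X}_{ST}$ as a close approximation to $T(F_n)$, where $F_n$ is the empirical cdf.

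The first step is to reconcile the discrete weights with the integral functional. Since $\overline{X}_{ST}=\tfrac{1}{n}\sum_{i=1}^n J\!\big(\tfrac{i}{n+1}\big)X_{(i)}$ uses the plug-in weights $J(i/(n+1))$ rather than $\int_0^1 F_n^{-1}(u)J(u)\,du$, I would show that the difference between the two is $o_p(n^{-1/2})$. Under $\mathrm{E}(X^2)<\infty$ together with the boundedness and a.e.-$F^{-1}$ continuity of $J$, this follows from the near-uniform spacing of the order statistics and the smoothness of $J$, so that the two representations share the same limiting law.

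The second step is the von Mises expansion
\begin{equation*}
T(F_n)-T(F)=\int IF(x,T,F)\,d(F_n-F)(x)+R_n=\frac{1}{n}\sum_{i=1}^n IF(X_i,T,F)+R_n,
\end{equation*}
where $IF$ is the influence function from Theorem \ref{theorem1}; the centering term in (\ref{eq:infl}) guarantees $\int IF\,dF=0$, so the summands are iid, mean-zero, with variance $\int IF^2\,dF=D(\overline{X}_{ST})$ by (\ref{stm_var}). Applying the Lindeberg--L\'evy CLT to this linear term yields $\sqrt{n}\,(\overline{X}_{ST}-\mu_{ST})\xrightarrow{d} N\big(0,\int IF^2\,dF\big)$, which is exactly the assertion once $D(\overline{X}_{ST})$ is read as the variance of $\overline{X}_{ST}$, i.e.\ with the $1/n$ scaling included.

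The main obstacle is controlling the remainder $R_n=o_p(n^{-1/2})$ of the von Mises expansion. This requires handling the quantile process near the trimming boundaries $\xi_\alpha,\xi_{1-\alpha}$, where the conditions $F'(\xi_\alpha)>0$ and $F'(\xi_{1-\alpha})>0$ (the same hypotheses used in Theorem \ref{teo2}) keep $F^{-1}$ locally Lipschitz and prevent the weight transitions of $J$ from inflating the remainder; because $J$ is supported on $[\alpha,1-\alpha]$, the extreme order statistics never enter, which tames the moment requirements. I would bound $R_n$ either directly, via the Glivenko--Cantelli theorem combined with a second-order control of $\int F_n^{-1}J$, or by invoking Stigler's (1974) representation theorem for L-statistics with smooth weights, the cited source for Theorem \ref{teo3}, which packages precisely this remainder analysis.
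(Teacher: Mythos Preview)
The paper does not prove Theorem~\ref{teo3} at all: it is quoted as a known result of Stigler (1974) and used as an input to the proof of Theorem~\ref{teo2}. There is therefore no ``paper's own proof'' to compare your attempt against.

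That said, your outline is broadly in the spirit of Stigler's original argument, which also proceeds by writing the L-statistic as its influence-function linearization $-\int J(F(y))(F_n(y)-F(y))\,dy$ plus a remainder, and then controls the remainder under exactly the hypotheses stated here (boundedness and a.e.-$F^{-1}$ continuity of $J$, and $\mathrm{E}X^2<\infty$). Two points are worth noting. First, Theorem~\ref{teo3} is stated for a \emph{general} bounded $J$, not for the specific trapezoidal weight (\ref{eq:svari3}); your sketch leans on Theorem~\ref{theorem1}'s explicit influence function and on $J$ being supported in $[\alpha,1-\alpha]$, which proves only a special case. Second, you import the density conditions $F'(\xi_\alpha)>0$, $F'(\xi_{1-\alpha})>0$ from Theorem~\ref{teo2} to control the quantile process near the trimming boundaries, but Theorem~\ref{teo3} as stated does not assume these; Stigler's remainder bound is obtained by a direct $L^2$ argument on $\int (J(F_n)-J(F))(F_n-F)$ using dominated convergence and the a.e.\ continuity of $J\circ F^{-1}$, and does not require positive density at any fixed quantile. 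So your route would yield a weaker result than the one cited, though it is adequate for the application in this paper.
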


\begin{lemma}(Barry \textit{et.al.}, 2008)\label{lema}
    Under the same conditions as in Theorem 2 and 3 
    \begin{equation*}
        \sum_{i = r+1}^{n-r} w_i W_{(i)}^2 \xrightarrow{p} \sigma^2_{ST}.
    \end{equation*}
\end{lemma}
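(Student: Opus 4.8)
The plan is to recognise the left-hand side as a pair of L-statistics and to establish their convergence by the weak law of large numbers for linear combinations of order statistics, the same mechanism that underlies Theorem \ref{teo3}. Since the weights satisfy $\sum_{i=r+1}^{n-r} w_i = 1$, expanding the square gives the decomposition
\begin{equation*}
\sum_{i=r+1}^{n-r} w_i W_{(i)}^2 = \sum_{i=r+1}^{n-r} w_i X_{(i)}^2 - 2\mu_{ST}\sum_{i=r+1}^{n-r} w_i X_{(i)} + \mu_{ST}^2 ,
\end{equation*}
so it suffices to show $\sum_{i=r+1}^{n-r} w_i X_{(i)} \xrightarrow{p} \mu_{ST}$ and $\sum_{i=r+1}^{n-r} w_i X_{(i)}^2 \xrightarrow{p} \int_{\xi_\alpha}^{\xi_{1-\alpha}} J(u) x^2\,dF(x)$ separately, and then to combine them.

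For each limit I would rewrite the weighted sum in quantile form,
\begin{equation*}
\sum_{i=r+1}^{n-r} w_i g(X_{(i)}) = \frac{\tfrac{1}{n}\sum_{i} J(\tfrac{i}{n+1}) g(X_{(i)})}{\tfrac1n\sum_i J(\tfrac{i}{n+1})},
\end{equation*}
with $g(x)=x$ and $g(x)=x^2$ respectively, and compare it to the integral $\int_0^1 J(u) g(F^{-1}(u))\,du$. The first statement is the consistency of the smoothly trimmed mean and follows from Theorem \ref{teo3}; the second is the same argument applied with the integrand $x^2$, which is integrable against $J\,dF$ on the retained range because $\mathrm{E}(X^2)<\infty$ (a hypothesis of Theorem \ref{teo3}) and $J$ is bounded (hypothesis (iii)). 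The convergence itself follows from the uniform consistency of the empirical quantile function, $\sup_{u}|F_n^{-1}(u)-F^{-1}(u)|\xrightarrow{p}0$ on any interval strictly inside $(0,1)$, together with the boundedness and a.e.\ continuity of $J$.

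Combining the two limits in the decomposition, and using $-2\mu_{ST}\cdot\mu_{ST}+\mu_{ST}^2 = -\mu_{ST}^2$, yields
\begin{equation*}
\sum_{i=r+1}^{n-r} w_i W_{(i)}^2 \xrightarrow{p} \int_{\xi_\alpha}^{\xi_{1-\alpha}} J(u) x^2\,dF(x) - \mu_{ST}^2 = \sigma_{ST}^2 ,
\end{equation*}
which is the claim.

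The main obstacle is controlling the boundary order statistics, i.e.\ the terms with index $i$ close to $r+1$ or $n-r$, where the squared deviations $W_{(i)}^2$ are largest. Here conditions (i) and (ii) are essential: continuity of $F$ together with $F'(\xi_\alpha)>0$ and $F'(\xi_{1-\alpha})>0$ guarantee that $X_{(r+1)}\xrightarrow{p}\xi_\alpha$ and $X_{(n-r)}\xrightarrow{p}\xi_{1-\alpha}$, so every retained $X_{(i)}$ is squeezed between two order statistics with finite limits and the squared terms are uniformly integrable over the retained range. This, rather than the routine Riemann-sum approximation, is what makes the second-moment L-statistic converge, and it is precisely the step where trimming (the restriction to $i=r+1,\dots,n-r$, with $J$ vanishing at the trimming points) is indispensable, since $\mathrm{E}(X^2)<\infty$ alone would not control the extreme squared order statistics.
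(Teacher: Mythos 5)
First, a point of comparison: the paper never proves this lemma. It is stated with an attribution to Barry \textit{et al.} (2008) and consumed as a black box in Appendix A.2, so your proposal is not an alternative to the paper's argument --- it is an attempt to supply an argument the paper delegates to a reference. Your outline is the natural one for such results: expand the square using $\sum_{i=r+1}^{n-r} w_i = 1$, reduce to weak laws for the first- and second-moment trimmed L-statistics, and use condition (ii) of Theorem \ref{teo2} to keep the retained order statistics bounded in probability.

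There is, however, one step that fails exactly where the paper needs the lemma most. You justify the convergence of $\tfrac{1}{n}\sum_i J(\tfrac{i}{n+1})\,g(X_{(i)})$ by invoking uniform consistency of the empirical quantile function, $\sup_u |F_n^{-1}(u)-F^{-1}(u)| \xrightarrow{p} 0$ on intervals strictly inside $(0,1)$. That statement is false unless $F^{-1}$ is continuous on the interval, i.e.\ unless $F$ is strictly increasing there. Conditions (i)--(iii) of Theorem \ref{teo2} require only continuity of $F$ and a positive derivative at the two trimming quantiles $\xi_\alpha,\xi_{1-\alpha}$; they deliberately allow $F$ to be flat in between, and the paper's central examples (mixtures such as $0.1N(-10,1)+0.8N(0,1)+0.1N(10,1)$, i.e.\ data with gaps) are precisely of this type. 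If $F$ is flat at level $u_0$ on an interval $[a,b]$ with $a<b$, then $F_n^{-1}(u_0)$ sits near $a$ or near $b$ according to whether a $\mathrm{Bin}(n,u_0)$ count exceeds $\lceil nu_0\rceil$, each with probability bounded away from zero, so it does not converge even pointwise at $u_0$, let alone uniformly on a neighbourhood. The lemma itself survives because the discontinuity points of $F^{-1}$ form a Lebesgue-null set: the correct ingredient is that $F_n^{-1}(u)\to F^{-1}(u)$ a.s.\ at every continuity point $u$ of $F^{-1}$, combined with domination of the integrand on $[\alpha,1-\alpha]$ --- which is exactly what your boundary observation $X_{(r+1)}\xrightarrow{p}\xi_\alpha$, $X_{(n-r)}\xrightarrow{p}\xi_{1-\alpha}$ provides --- followed by dominated convergence applied to $\int_0^1 J(u)\,g(F_n^{-1}(u))\,du$. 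So your boundary control should be used for domination, not to rescue a uniformity claim that is unavailable under the stated hypotheses.

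A secondary issue, inherited from the paper's own loose notation but glossed over in your limits: for the weight function (\ref{eq:svari3}), $\int_0^1 J(u)\,du = 1-\alpha-\gamma \neq 1$, so the normalized weights $w_i$ (which sum to one) and the unnormalized integrals defining $\mu_{ST}$ and $\sigma_{ST}^2$ differ by this factor. Your ratio representation makes the denominator $\tfrac1n\sum_i J(\tfrac{i}{n+1}) \to 1-\alpha-\gamma$ explicit, yet you then identify the limit with $\int_0^1 J(u)\,g(F^{-1}(u))\,du$ rather than with that integral divided by $1-\alpha-\gamma$. A self-contained proof must fix one normalization convention (e.g.\ replace $J$ by $J/\int_0^1 J(v)\,dv$ throughout) and state both $\mu_{ST}$ and $\sigma_{ST}^2$ consistently with it; as written, your two displayed limits and the claimed conclusion cannot all hold simultaneously unless that factor equals one.
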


\noindent By Assumption 2 from Theorem \ref{teo2} we have $\max |X_{(i)}| = O_p(1)$. Thus
\begin{equation} \label{as_2}
    \max|W_{(i)}| \leq \max|X_{(i)}| + |\mu_{ST}| = O_p(1).
\end{equation}

\noindent From (Owen, 2001) and the same conditions as in Theorem \ref{teo3}, we know that
\begin{equation*}
    |\lambda| = O_p(n^{-1/2}).
\end{equation*}

\noindent Expressing (\ref{lameq}) in Taylor series we get
\begin{equation}
\label{eq3}
    l(\mu_{ST}) = 2m\sum_{i=r+1}^{n-r} w_i \lambda W_{(i)} - m \sum_{i=1+1}^{n-r} w_i \lambda^2 W_{(i)}^2 + r_n,
\end{equation}
where
\begin{equation}\label{max_W}
    |r_n| \leq B |\lambda^3| \max|W_{(i)}|  \sum_{i=r+1}^{n-r} w_i W_{(i)}^2 = O_p(n^{-3/2}) . 
\end{equation}

\noindent Rewrite (\ref{peq}) in the form
\begin{equation*}
        \sum_{i=r+1}^{n-r} \frac{w_i  W_{(i)} }{1 + \lambda W_{(i)}}  = \sum_{i=r+1}^{n-r} w_i W_{(i)} \Bigg (1 - \lambda W_{(i)} + \frac{ \big(\lambda W_{(i)} \big)^2}{1 + \lambda W_{(i)}} \Bigg ) = 0,
\end{equation*}        
thereby
\begin{equation}
\label{eq4}
    \lambda \sum_{i=r+1}^{n-r} w_i W_{(i)}^2 = \sum_{i=r+1}^{n-r} w_i W_{(i)} + \sum_{i=r+1}^{n-r}  \frac{w_i \lambda^2 W_{(i)}^3}{1 + \lambda W_{(i)}}.
\end{equation}

\noindent By (\ref{max_W}) , (\ref{as_2}) and Lemma \ref{lema} we have
\begin{align*}
    \lambda &=  \Bigg \{ \sum_{i=r+1}^{n-r} w_i W_{(i)}^2 \Bigg \}^{-1} \sum_{i=r+1}^{n-r} w_i W_{(i)} + \sum_{i=r+1}^{n-r}\frac{w_i \lambda^2 W_{(i)}^3}{1 + \lambda W_{(i)}}  \Bigg\{\sum_{i=r+1}^{n-r} w_i W_{(i)}^2\Bigg\}^{-1}\\
    &= \Bigg \{ \sum_{i=r+1}^{n-r} w_i W_{(i)}^2 \Bigg \}^{-1} \sum_{i=r+1}^{n-r} w_i W_{(i)} + O_p(n^{-1}).
\end{align*}

\noindent Multiplying both sides of the expression (\ref{peq}). by $\lambda$, we get
\begin{equation*}
\sum_{i=r+1}^{n-r} \frac{ w_i\lambda  W_{(i)} }{1 + \lambda W_{(i)}} = \sum_{i=r+1}^{n-r} w_i\lambda  W_{(i)} \Bigg (1 - \lambda W_{(i)} + \frac{ \big(\lambda W_{(i)} \big)^2}{1 + \lambda W_{(i)}} \Bigg ) = 0,
\end{equation*}
\noindent therefore 
\begin{equation}
\label{eq2}
    \sum_{i=r+1}^{n-r} w_i \lambda W_{(i)} = \sum_{i=r+1}^{n-r} w_i \lambda^2 W_{(i)}^2 + O_p(n^{-3/2}).
\end{equation}

\noindent Finally, we have
\begin{align*}
    a l(\mu_{ST}) &=a \Bigg ( 2m\sum_{i=r+1}^{n-r} w_i \lambda W_{(i)} - m \sum_{i=1+1}^{n-r} w_i \lambda^2 W_{(i)}^2 \Bigg ) + O_p(n^{-3/2})\\
    & = a  m  \lambda^2 \sum_{i=r+1}^{n-r} w_i W_{(i)}^2 + O_p(n^{-3/2})\\
    & = a m \Bigg ( \lambda \Bigg \{ \sum_{i=r+1}^{n-r} w_i W_{(i)}^2 \Bigg \}^{-1} \sum_{i=r+1}^{n-r} w_i W_{(i)} + O_p(n^{-3/2}) \Bigg ) \sum_{i=r+1}^{n-r} w_i W_{(i)}^2 + O_p(n^{-3/2}) \\
    & = am \Bigg (\lambda  \sum_{i=r+1}^{n-r} w_i W_{(i)} +   O_p(n^{-3/2})  \Bigg ) + O_p(n^{-3/2})\\
    & = am \Bigg ( \sum_{i=r+1}^{n-r} w_i W_{(i)} \Bigg)^2 \Bigg ( \sum_{i=r+1}^{n-r} w_i W_{(i)}^2 \Bigg )^{-1} + O_p(n^{-3/2})\\
    & = \frac{ m \sigma_{ST}^2 }{n (1-2\alpha) D(\overline{X}_{ST})} \Bigg ( \sum_{i=r+1}^{n-r} w_i W_{(i)} \Bigg)^2  \Bigg ( \sum_{i=r+1}^{n-r} w_i W_{(i)}^2 \Bigg )^{-1} + O_p(n^{-3/2}). \\
\end{align*}

\noindent From Theorem \ref{teo3}, Lemma \ref{lema} and Slutsky's lemma it follows that $al(\mu_{ST}) \xrightarrow{d} \chi_1^2$.

\section*{Data availability statement}
R scripts are publicly available in the Github repository \href{https://github.com/LU-SPDAL/publications/tree/main}{\textit{https://github.com/LU-SPDAL/publications/tree/main}}.
\nocite{*}
\bibliographystyle{agsm}
\bibliography{references_1}

@article{Tukey,
  title={Less vulnerable confidence and significance procedures for location based on a single sample: Trimming/Winsorization 1},
  author={Tukey, John W and McLaughlin, Donald H},
  journal={Sankhy{\=a}: The Indian Journal of Statistics, Series A},
volume = {25},
number = {3},
  pages={331--352, \href{https://www.jstor.org/stable/25049278}{\textit{https://www.jstor.org/stable/25049278}}},

  year={1963},
  publisher={JSTOR},
}

@article{Stigler,
  title={The asymptotic distribution of the trimmed mean},
  author={Stigler, Stephen M},
  journal={The Annals of Statistics},
  pages={472--477, \href{https://doi.org/10.1214/aos/1176342412}{https://doi.org/10.1214/aos/1176342412}},
  year={1973},
  publisher={JSTOR}
}

@article{Crow,
  title={Robust estimation of location},
  author={Crow, Edwin L and Siddiqui, MM},
  journal={Journal of the American Statistical Association},
  volume={62},
  number={318},
  pages={353--389, \href{https://doi.org/10.1214/aoms/1177703732}{https://doi.org/10.1214/aoms/1177703732}},
  year={1967},
  publisher={Taylor \& Francis}
}

@article{Jackknifing,
  title={Jackknifing L-statistics with smooth weight functions},
  author={Parr, William C and Schucany, William R},
  journal={Journal of the American Statistical Association},
  volume={77},
  number={379},
  pages={629--638, \href{https://doi.org/10.1080/01621459.1982.10477862}{https://doi.org/10.1080/01621459.1982.10477862}},
  year={1982},
  publisher={Taylor \& Francis}
}

@book{Wilcox,
  title={Introduction to robust estimation and hypothesis testing},
  author={Wilcox, Rand R},
  year={2011},
  publisher={Academic press, \href{https://doi.org/10.1016/C2010-0-67044-1}{https://doi.org/10.1016/C2010-0-67044-1}}
}

@book{Maronna,
  title={Robust statistics: theory and methods (with R)},
  author={Maronna, Ricardo A and Martin, R Douglas and Yohai, Victor J and Salibi{\'a}n-Barrera, Mat{\'\i}as},
  year={2019},
  publisher={John Wiley \& Sons, \href{https://doi.org/10.1002/9781119214656}{https://doi.org/10.1002/9781119214656}}
}

@article{Chernoff,
  title={Asymptotic distribution of linear combinations of functions of order statistics with applications to estimation},
  author={Chernoff, Herman and Gastwirth, Joseph L and Johns, M Vernon},
  journal={The Annals of Mathematical Statistics},
  volume={38},
  number={1},
  pages={52--72, \href{https://doi.org/10.1214/aoms/1177699058}{https://doi.org/10.1214/aoms/1177699058}},
  year={1967},
  publisher={JSTOR}
}

@article{Welsh,
  title={Adaptive choice of trimming proportions},
  author={Jure{\v{c}}kov{\'a}, Jana and Koenker, Roger and Welsh, AH},
  journal={Annals of the Institute of Statistical Mathematics},
  volume={46},
  number={4},
  pages={737--755, \href{https://doi.org/10.1007/BF00773479}{https://doi.org/10.1007/BF00773479}},
  year={1994},
  publisher={Springer}
}

@article{Oosterhoff,
  title={Trimmed mean or sample median?},
  author={Oosterhoff, J},
  journal={Statistics \& Probability Letters},
  volume={20},
  number={5},
  pages={401--409, \href{https://doi.org/10.1016/0167-7152(94)90132-5}{https://doi.org/10.1016/0167-7152(94)90132-5}},
  year={1994},
  publisher={Elsevier}
}

@article{Hill,
  title={Robustness in real life: A study of clinical laboratory data},
  author={Hill, MaryAnn and Dixon, WJ},
  journal={Biometrics},
volume = {38},
number = {2},
  pages={377--396, \href{https://doi.org/10.2307/2530452}{https://doi.org/10.2307/2530452}},
  year={1982},
  publisher={JSTOR}
}

@book{orderstat,
  title={A first course in order statistics},
  author={Arnold, Barry C and Balakrishnan, Narayanaswamy and Nagaraja, Haikady Navada},
  year={2008},
  publisher={SIAM, \href{https://doi.org/10.1137/1.9780898719062}{https://doi.org/10.1137/1.9780898719062}}
}

@book{VanderVaart,
  title={Asymptotic statistics},
  author={Van der Vaart, Aad W},
  volume={3},
  year={2000},
  publisher={Cambridge university press, \href{https://doi.org/10.1017/CBO9780511802256}{https://doi.org/10.1017/CBO9780511802256}}
}

@article{sawilowsky,
  title={A measure of relative efficiency for location of a single sample},
  author={Sawilowsky, Shlomo S},
  journal={Journal of Modern Applied Statistical Methods},
  volume={1},
  number={1},
  pages={8, \href{https://doi.org/10.22237/jmasm/1020254940}{https://doi.org/10.22237/jmasm/1020254940}},
  year={2002}
}

@article{jaeckel,
  title={Some flexible estimates of location},
  author={Jaeckel, Louis A},
  journal={The Annals of Mathematical Statistics},
  volume={42},
  number={5},
  pages={1540--1552, \href{https://doi.org/10.1214/aoms/1177693152}{https://doi.org/10.1214/aoms/1177693152}},
  year={1971},
  publisher={Institute of Mathematical Statistics}
}

@article{leger1990bootstrap,
  title={Bootstrap adaptive estimation: The trimmed-mean example},
  author={L{\'e}ger, Christian and Romano, Joseph P},
  journal={Canadian Journal of Statistics},
  volume={18},
  number={4},
  pages={297--314, \href{ https://doi.org/10.2307/3315837.n}{ https://doi.org/10.2307/3315837.n}},
  year={1990},
  publisher={Wiley Online Library}
}

@article{keselman2007adaptive,
  title={Adaptive robust estimation and testing},
  author={Keselman, HJ and Wilcox, Rand R and Lix, Lisa M and Algina, James and Fradette, Katherine},
  journal={British Journal of Mathematical and Statistical Psychology},
  volume={60},
  number={2},
  pages={267--293, \href{https://doi.org/10.1348/000711005X63755}{https://doi.org/10.1348/000711005X63755}},
  year={2007},
  publisher={Wiley Online Library}
}

@book{huber2004robust,
  title={Robust statistics},
  author={Huber, Peter J},
  volume={523},
  year={2004},
  publisher={John Wiley \& Sons, \href{https://doi.org/10.1002/0471725250}{https://doi.org/10.1002/0471725250}}
}

@article{qin2002empirical,
  title={Empirical likelihood ratio confidence interval for the trimmed mean},
  author={Qin, Gengsheng and Tsao, Min},
  journal={Communications in statistics-theory and methods},
  volume={31},
  number={12},
  pages={2197--2208, \href{https://doi.org/10.1081/STA-120017221}{https://doi.org/10.1081/STA-120017221}},
  year={2002},
  publisher={Taylor \& Francis}
}

@article{glenn2007weighted,
  title={Weighted empirical likelihood estimates and their robustness properties},
  author={Glenn, NL and Zhao, Yichuan},
  journal={Computational statistics \& data analysis},
  volume={51},
  number={10},
  pages={5130--5141, \href{https://doi.org/10.1016/j.csda.2006.07.032}{https://doi.org/10.1016/j.csda.2006.07.032}},
  year={2007},
  publisher={Elsevier}
}

@book{owen2001empirical,
  title={Empirical likelihood},
  author={Owen, Art B},
  year={2001},
  publisher={Chapman and Hall/CRC, \href{https://doi.org/10.1201/9781420036152}{https://doi.org/10.1201/9781420036152}}
}

@article{stigler1974linear,
  title={Linear functions of order statistics with smooth weight functions},
  author={Stigler, Stephen M},
  journal={The Annals of Statistics},
  pages={676--693, \href{https://doi.org/10.1214/aos/1176342756}{https://doi.org/10.1214/aos/1176342756}},
  year={1974},
  publisher={JSTOR}
}

@book{wilcox2023guide,
  title={A guide to robust statistical methods},
  author={Wilcox, Rand R},
  year={2023},
  publisher={Springer Nature, \href{https://doi.org/10.1007/978-3-031-41713-9}{https://doi.org/10.1007/978-3-031-41713-9}}
}

\end{document}